\newtheorem{thm}{Theorem}[section]
\newtheorem{theorem}[thm]{Theorem}
\newtheorem{lemma}[thm]{Lemma}
\newtheorem{prop}[thm]{Proposition}
\theoremstyle{definition}
\newtheorem{definition}{Definition}[section]
\theoremstyle{observation}
\theoremstyle{definition}
\newcommand{\voided}[1]{}
\newcommand{\defin}[1]{{\it #1}}
\newcommand{\N}{\mathbb{N}}
\newcommand{\Q}{\mathbb{Q}}
\newtheorem*{riemann-mapping-theorem}{Riemann Mapping  Theorem}{\bf}{\it}
{\bf}{\it}
{\bf}{\it}
{\bf}{\it}
{\bf}{\it}
{\bf}{\it}
{\bf}{\it}
{\bf}{\it}
{\bf}{\it}
{\bf}{\it}
\newenvironment{pf*}[1]{\proof[#1]}{\endproof}
\newcommand{\beq}{\begin{equation}}
\newcommand{\eeq}{\end{equation}}
\newtheorem{defn}{Definition}[section]
\newcommand{\tl}{\tilde}
\newcommand{\eps}{\epsilon}
\numberwithin{equation}{section}
\newcommand{\supp}{\operatorname{Supp}}
\newcommand{\cM}{{\mathcal M}}
\newcommand{\cP}{{\mathcal P}}
\newcommand{\RR}{{\mathbb R}}
\newcommand{\NN}{{\mathbb N}}
\newcommand{\ignore}[1]{{}}
 \title[Ulam meets Turing]{Ulam meets Turing: constructing quadratic maps with non-computable SRB measures}
\author{Cristóbal Rojas}
\address{Instituto de Ingeniería Matemática y Computacional,\\ Pontificia Universidad Católica de Chile.\\ Vicuña Mackenna 4860, San Joaquín, Santiago, Chile.   
\\E-mail:  \texttt{\emph{cristobal.rojas{@}mat.uc.cl}}}
\author{Michael Yampolsky}
\address{Department of Mathematics, University of Toronto,\\ 40 St George Street, Toronto, Ontario, Canada. \\ E-mail:  \texttt{\emph{yampol{@}math.toronto.edu}}}
 \thanks{ C.R was partially supported by Grants ANID/FONDECYT  Regular 1230469 and ANID/Basal National Center for Artificial Intelligence CENIA FB210017.  M.Y. was partially supported by NSERC Discovery grant.}
\keywords{Non-computability, unimodal maps, physical measures, Monte Carlo simulation.}
\subjclass[2010]{68Q17 and 37E05.} 
\begin{document}

\begin{abstract}
In 1946, S. Ulam invented Monte Carlo method, which has since become the standard numerical technique for making statistical predictions for long-term behaviour of dynamical systems. We show that this, or in fact any other numerical approach can fail for the simplest non-linear discrete dynamical systems given by the logistic maps $f_{a}(x)=ax(1-x)$ of the unit interval. We show that there exist computable real parameters $a\in (0,4)$ for which almost every orbit of $f_a$ has the same asymptotical statistical distribution in $[0,1]$, but this limiting distribution is not Turing computable. 
\end{abstract}

\maketitle


\section{Introduction}

For all practical purposes, the world around us is not a deterministic one.  Even if a simple physical system can be described deterministically,  say by the laws of Newtonian mechanics, the differential equations expressing these laws typically cannot be solved explicitly. This means that predicting the exact evolution of the system for a long period of time is impossible. A classical example is the famous 3-body Problem, which asks to describe the evolution of a system in which three celestial bodies (the ``Earth'', the ``Sun'', and the ``Moon'') interact with each other via the Newton's force of gravity. Computers are generally not of much help either: of course, a system of ODEs can be solved numerically, but the solution will inevitably come with an error due to round-offs. Commonly, solutions of dynamical systems are very sensitive to such small errors (the phenomenon known as ``Chaos''), so the same computation can give wildly different numerical results.

An extreme example of the above difficulties is the art of weather prediction. A realistic weather model will have such a large number of inputs and parameters that simply running a numerical computation will require a massive amount of computing resources; it is, of course, extremely sensitive to errors of computation. A classical case in point is the Lorenz system suggested by meteorologist Edward Lorenz in 1963 \cite{Lorenz}. It has only three variables and is barely non-linear (just enough not to have an explicit solution), and nevertheless it possesses a chaotic attractor \cite{Tucker} -- one of the first such examples in history of mathematics-- so deterministic weather predictions even in such a simplistic model are practically impossible.

Of course, this difficulty is well known to practitioners, and yet weather predictions are somehow made, and sometimes are even accurate. They are made in the language of statistics (e.g. there is a 40\% chance of rain tomorrow), and are based on what is broadly known as {\it Monte Carlo} technique, pioneered by Ulam and von Neumann in 1946 \cite{URvN,MetUl,Metropolis}. Informally  speaking, we can throw random darts to select a large number of initial values; run our simulation for the desired duration for each of them; then statistically average the outcomes. We then expect these averages to reflect the true statistics of our system. To set the stage more formally, let us assume that we have a discrete-time dynamical system
$$f:D\to D,\text{ where }D\text{ is a finite domain in }\mathbb R^n$$
that we would like to study. Let $\bar x_1,\ldots,\bar x_k$ be $k$ points in $D$ randomly chosen, for some $k>>1$ and consider the probability measure
\begin{equation}\label{eq:m-csum}
  \mu_{k,n}=\frac{1}{kn}\sum^k_{l=1}\sum_{m=1}^n \delta_{f^{\circ m}(\bar x_l)},
\end{equation}
where $\delta_{\bar x}$ is the delta-mass at the point $\bar x\in\mathbb R^n$. The mapping $f$ can either be given by mathematical formulas, or stand for a computer program we wrote to simulate our dynamical system. The standard postulate is then  that for $k,n\to\infty$ the probabilities $\mu_{k,n}$ converge to a limiting statistical distribution that we can use to make meaningful long-term {\it statistical} predictions of our system.

Let us say that a measure $\mu$ on $D$ is a {\it physical measure} of $f$ if its basin $B(\mu)\subset D$ --that is, the set of initial values $\bar x$ for which the weak limit of $\frac{1}{n}\sum_{m=1}^n \delta_{f^{\circ m}(\bar x)}$ equals $\mu$-- has positive Lebesgue measure. This means that the limiting statistics of such points will appear in the averages (\ref{eq:m-csum}) with a non-zero probability. If there is a unique physical measure in our dynamical system, then one random dart in (\ref{eq:m-csum}) will suffice. Of course, there are systems with many physical measures. For instance, Newhouse \cite{Newhouse} showed that a polynomial map $f$ in dimension $2$ can have infinitely many attracting basins, on each of which the dynamics will converge to a different stable periodic regime. This in itself, however, is not necessarily an obstacle to the Monte-Carlo method, and indeed, the empirical belief is that it still succeeds in these cases. 

Our results are most surprising in view of the above computational statistical paradigm. Namely we consider the simplest examples of non-linear dynamical systems: quadratic maps of the interval $[-1,1]$ of the form
$$f_a(x)=ax(1-x),\; a\in(0,4]$$
and find computable values of $a$ for which:
\begin{enumerate}
\item\label{existence} there exists a {\it unique} physical measure $\mu$ and its basin $B(\mu)\subset [0,1]$ has full Lebesgue measure. 
  \item\label{non_computability} the measure $\mu$ is not computable. 

\end{enumerate}

In a version of this work presented at \cite{RY-STOC}, we gave a non-constructive proof of this non-computability phenomenon, but the question of whether such parameters can be produced algorithmically was left open.  The fact that such a parameter $a$ can be computed means that there is a computer program that can approximate its value at any desired precision.  However, there exists no computer program to approximate the probability that an orbit of the corresponding system be in a given rational subinterval of $[0,1]$. Thus, the Monte-Carlo computational approach fails spectacularly for truly simple and explicitly computable maps -- not because there are no physical measures, or too many of them, but because the ``nice'' unique limiting statistics cannot be computed, and thus the averages (\ref{eq:m-csum}) will not converge to anything meaningful in practice.

It is worth drawing a parallel with our recent paper \cite{YRAdvances19}, in which we studied the computational complexity of topological attractors of maps $f_a$. Such attractors  capture the limiting {\it deterministic} behavior of the orbits. They are always computable, and we found that for almost every parameter $a$, the time complexity of computing its attractor is polynomial, although there exist attractors with an arbitrarily high computational complexity. In dynamics, both in theory and in practice, it is generally assumed that long-term statistical properties are simpler to analyze than their deterministic counterparts. From the point of view of computational complexity, this appears to be false.

We note that computability of invariant measures has been studied before \cite{Cristobal, GalHoyRoj09, GaHoRo, BBRY}. In \cite{GalHoyRoj09} for instance the authors construct continuous maps of the circle for which computable invariant measures do not exists. In the context of symbolic systems, there have been some recent works studying the computational properties of the limiting statistics, see e.g. \cite{Sablik}, and of thermodynamic invariants (see e.g. in \cite{HM, BSW}). The computational complexity of individual trajectories in Hamiltonian dynamics has been addressed in e.g. \cite{kawa}. Long-term unpredictability is generally associated with dynamical systems containing embedded Turing machines (see e.g. the works \cite{moore, MK,koiran, BGR, BRS}). Dynamical properties of Turing machines viewed as dynamical systems have similarly been considered (cf. \cite{Kurka, jeandel}). Yet we are not aware of any studies of the limiting statistics in this latter context.  We also point out that a different notion of statistical intractability in dynamics, based on the complexity of a mathematical description of the set of limiting measures, has been introduced and studied in \cite{berger1,berger2}. 

From a practical point of view, some immediate questions arise. Our examples are rare in the one-parameter quadratic family $f_a(x)=ax(1-x)$. However, there are reasons to expect that in more complex multi-parametric, multi-dimensional families, they can become common. Can they be {\it generic} in a natural setting? As the results of \cite{berger2} suggest, the answer may already be ``yes" for quadratic polynomial maps in dimension two.
Furthermore, even in the one-dimensional quadratic family $f_a$ it is natural to ask what the typical computational complexity of the limiting statistics is -- even if it is computable in theory, it may not be in practice.  

\subsection*{Acknowledgement} The authors would like to thank Pierre Berger for helpful discussions.


\section{Preliminaries}

 \subsection*{Statistical simulations and computability of probability measures}
 
We give a very brief summary of relevant notions of Computability Theory and Computable Analysis. For a more in-depth
introduction, the reader is referred to e.g. \cite{BY-book}.
As is standard in Computer Science, we formalize the notion of
an algorithm as a {\it Turing Machine} \cite{Tur}.  
We will call a  function $f:\NN\to\NN$  \emph{computable} (or {\it recursive}), if there exists a Turing Machine  $\cM$ which, upon input $n$, outputs $f(n)$.
Extending algorithmic notions to functions of real numbers was pioneered by Banach and Mazur \cite{BM,Maz}, and
is now known under the name of {\it Computable Analysis}. 
Let us begin by giving the modern definition of the notion of computable real
number,  which goes back to the seminal paper of Turing \cite{Tur}. By identifying $\Q$ with $\N$ through some effective enumeration, we can assume algorithms can operate on $\Q$. Then a real number $x\in\RR$ is called \defin{computable} if there is an algorithm  $M$ which, upon input $n$, halts and outputs a rational number $q_n$ such that  $|q_n-x|<2^{-n}$.
Algebraic numbers or  the familiar constants such as $\pi$, $e$, or the Feigenbaum constant  are computable real numbers. However, the set of all computable real numbers $\RR_C$ is necessarily countable, as there are only countably many Turing Machines. 

We now define computability of functions over $[0,1]$.  Recall that for a continuous function $f\in C_{0}([0,1])$, a modulus of continuity consists of a function $\delta: \Q\cap(0,a)\to\Q\cap(0,a)$ such that $|f(x)-f(y)|\leq \epsilon$ whenever $|x-y|\leq \delta(\epsilon)$. A function $f:[0,1]\to[0,1]$ is \defin{computable} if it has a computable modulus of continuity and there is an algorithm which, provided with a rational number which is $\delta(\epsilon)$-close to $x$, outputs a rational number which is $\epsilon$-close to $f(x)$.  

Computability of probability measures, say over $[0,1]$ for instance, is defined by requiring the ability to compute the expected value of computable functions. 

\begin{defn}\label{1}Let $(f_{i})$ be any sequence of uniformly computable functions over $[0,1]$. A probability measure $\mu$ over $[0,1]$ is \defin{computable} if there exist a Turing Machine $M$ which on input $(i,\epsilon)$ (with $\epsilon \in \Q$) outputs a rational $M(i,\epsilon)$ satisfying
$$
|M(i,\epsilon) - \int f_{i}\,d\mu | < \epsilon.
$$
\end{defn}

We note that this definition it compatible with the notion of weak convergence (see Section \ref{stage}) of measures in the sense that a measure is computable if and only if it can be algorithmically approximated (in the weak topology) to an arbitrary degree of accuracy by measures supported on finitely many rational points and with rational weights.  
Moreover, this definition also models well the intuitive notion of statistical sampling in the sense that a measure $\mu$ is computable if and only if there is an algorithm to convert sequences sampled from the uniform distribution into sequences sampled with respect to $\mu$. 

In this paper, we will be interested in the computability properties of invariant measures of  quadratic maps of the form $ax(1-x)$, with $a\in \RR$. 


\subsection*{Invariant measures of quadratic polynomials and the statement of the main result.}
As before, we denote 
$$f_a(x)=ax(1-x).$$
For $a\in[0,4]$, this quadratic polynomial maps the interval $[0,1]$ to itself. We will view $f_a:[0,1]\to[0,1]$ as a discrete dynamical system, and will denote $f_a^n$ the $n$-th iterate of $f_a$. 

A measure $\mu$ is called \defin{physical} or {\it Sinai-Ruelle-Bowen (SRB)} if 
\begin{equation}\label{srb}
\frac{1}{n}\sum_{k=0}^{n-1}\delta_{f^{k}x} \to \mu 
\end{equation}
for a set of positive Lebesgue measure.  It is known that if a physical measure exists for a quadratic map $f_{a},\,\, a\in [0,4]$, then it is unique and (\ref{srb}) is satisfied for Lebesgue almost all $x\in [0,1]$.

\bigskip
\noindent\textbf{Main Theorem.} \emph{
There exist computable parameters $a\in(0,4)$ for which the quadratic map $f_a(x)=ax(1-x)$ has a physical measure $\mu$ which is not computable. 
	}
\bigskip

\noindent
We note that in the version of this work which was presented at STOC-2020 \cite{RY-STOC}, we showed the existence of an uncountable set of parameters for which the physical measure exists but is not computable, yet the proof was not constructive.


\section{Proof of the Main Theorem}
The proof is based on a delicate construction in one-dimensional dynamics described in \cite{HofKel}, which will allow us to construct maps $f_a$ with physical measures which selectively charge points in a countable set of periodic orbits. To give precise formulations, we will need to introduce some further concepts.

\subsection{Setting the Stage}\label{stage}
It will be convenient to recall that weak convergence of measures on $[0,1]$ is compatible with the notion of {\it Wasserstein-Kantorovich distance},
defined by:

\begin{equation*}
W_{1}(\mu,\nu)=\underset{f\in 1\text{-Lip}([0,1])}{\sup}\left|\int f d\mu-\int f d\nu\right|
\end{equation*}
\noindent where $1\mbox{-Lip}([0,1])$ is the space of Lipschitz functions on $[0,1]$, having Lipschitz constant less than one.

For $a\in[0,4]$ and $x\in[0,1]$, we set
$$\nu_a^n(x)=\frac{1}{n}\sum_{k=0}^{n-1}\delta_{f_a^kx}.$$

We will make use of the following folklore fact (see e.g. \cite{dMvS}):
\begin{prop}
  \label{sink}
  Suppose, for $a\in[0,4]$ the map $f_a$ has a non-repelling periodic orbit of period $p$:
  $$x_0\overset{f_a}{\mapsto}x_1\overset{f_a}{\mapsto}\cdots\overset{f_a}{\mapsto}x_{p-1}\overset{f_a}{\mapsto}x_0,\;\left| \frac{d}{dx}f_a^p(x_0)\right|\leq 1.$$
  Let
  $$\mu\equiv \frac{1}{p}\sum_{k=0}^{p-1}\delta_{x_k}.$$
  Then $\mu$ is the unique physical measure of $f_a$ (so, in particular, the non-repelling orbit is unique); and
  $$W_1(\nu_a^n(x), \mu)\to 0$$
  uniformly on a set of full Lebesgue measure in $[0,1]$. 
  \end{prop}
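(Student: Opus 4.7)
The plan is to reduce the statement to three classical facts about $C^3$ unimodal maps with negative Schwarzian derivative, which applies to $f_a$ since a direct computation gives $Sf_a < 0$ on $[0,1]\setminus\{1/2\}$. First, I would invoke Singer's theorem: for a unimodal map with negative Schwarzian derivative, the immediate basin of every non-repelling periodic cycle must contain either the unique critical point $c = 1/2$ or a boundary point of the interval. Since the boundary fixed point $0$ is repelling whenever $a > 1$ (and the case $a \le 1$ is trivial, with the orbit $\{0\}$ being the only non-repelling cycle and the statement reducing to $f_a^n(x) \to 0$ for all $x \in [0,1]$), we conclude that the critical orbit is attracted to the given cycle $\{x_0,\ldots,x_{p-1}\}$. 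This simultaneously proves uniqueness of the non-repelling cycle, because two such cycles would require two disjoint forward orbits of $c$.

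Second, I would invoke the standard fact (going back to Guckenheimer, and sharpened by Blokh--Lyubich and de Melo--van Strien, see \cite{dMvS}) that for a $C^2$ unimodal map with negative Schwarzian derivative and a non-repelling cycle whose immediate basin contains the critical point, the full basin
\[
B = \{x \in [0,1] : \operatorname{dist}(f_a^n(x), \{x_0,\ldots,x_{p-1}\}) \to 0\}
\]
has full Lebesgue measure in $[0,1]$. The complement of $B$ is a hyperbolic Cantor-like set (the forward-invariant set on which the dynamics is expanding away from the critical orbit), and in the presence of negative Schwarzian and an attracted critical point this set carries zero Lebesgue measure by a standard Koebe-distortion argument.

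Third, I would analyze the time averages pointwise on $B$. For any $x \in B$, write $f_a^n(x) = y_{n \bmod p} + \varepsilon_n$ where $\varepsilon_n \to 0$ and $y_j$ is the element of the cycle that $f_a^n(x)$ approaches along the appropriate subsequence (in the neutral case the rate of convergence may be polynomial rather than exponential, but convergence to the cycle still holds). Partitioning the sum $\nu_a^n(x) = \frac{1}{n}\sum_{k=0}^{n-1}\delta_{f_a^k x}$ into $p$ arithmetic subsequences, each subsequence contributes a Cesàro average of delta masses concentrating on one of the $x_k$. Testing against any $1$-Lipschitz function $\varphi$ gives
\[
\left| \int \varphi\, d\nu_a^n(x) - \int \varphi\, d\mu \right| \le \frac{1}{n}\sum_{k=0}^{n-1} |\varphi(f_a^k x) - \varphi(x_{k \bmod p})| + O(1/n),
\]
and the right-hand side tends to $0$ uniformly in $\varphi \in 1\text{-Lip}([0,1])$ because $|f_a^k x - x_{k \bmod p}| \to 0$. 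Hence $W_1(\nu_a^n(x),\mu) \to 0$ for every $x \in B$, which is a set of full Lebesgue measure. Uniformity on a full-measure set can then be arranged by Egorov's theorem.

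The main obstacle is the full-Lebesgue-measure statement in step two, which is not elementary: it relies essentially on the combination of negative Schwarzian derivative (providing Koebe distortion control away from $c$) with the fact that the critical orbit lies in the basin (which kills the potential absolutely continuous invariant measure on the complement). The convergence analysis in step three, once $B$ has full measure, is a soft consequence of the Wasserstein characterization and the trivial per-point dynamics, and the Singer step is a direct textbook application.
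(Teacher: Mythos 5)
The paper does not actually prove this proposition: it is stated as a ``folklore fact'' with a pointer to de Melo--van Strien, and is used as a black box. Your argument is precisely the standard textbook proof that the citation is standing in for (negative Schwarzian, Singer's theorem, full measure of the basin, then elementary Ces\`aro averaging against Lipschitz test functions), and it is correct in outline; so you have supplied more than the paper does, along essentially the route the authors intend.

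Two small points are worth tightening. First, in the neutral case the complement of the basin is not literally a hyperbolic Cantor set: it contains the parabolic cycle together with its grand orbit and the points approaching the cycle from its repelling side, so the bare ``Ma\~n\'e plus Koebe distortion'' argument does not apply verbatim. The clean statement to invoke is the finiteness-of-attractors theorem for $S$-unimodal maps (Blokh--Lyubich; de Melo--van Strien, Ch.~V): Lebesgue-a.e.\ point has $\omega$-limit set equal to the metric attractor, and when a non-repelling cycle exists, Singer forces the critical orbit into its basin, which rules out the interval and solenoidal alternatives; this covers the parabolic case uniformly. Second, your appeal to Egorov does not yield uniform convergence on a set of \emph{full} measure, only on sets of measure $1-\epsilon$ for each $\epsilon>0$; as literally stated the proposition's uniformity claim is achievable only in that Egorov sense (or locally uniformly on the open basin), and indeed that weaker form is exactly what the paper uses later (``for any $x$ in a set of measure at least $1-2^{-k}$''). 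Neither point affects the substance, but both deserve a sentence if this is to be a self-contained proof.
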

  
We now introduce a certain collection of periodic orbits. For $a\in(0,4]$ consider the third iterate $g_a\equiv f_a^3$. 
We start by noting that there exists a parameter value $c\in (3.85,4)$ such that the following holds: $$g_c(0.5)\neq g_c^2(0.5)=g_c^3(0.5).$$
If we denote $\beta_c=g_c^2(0.5)$, then $\beta'_c\equiv g_c(0.5)=1-\beta_c$, and denoting $I_c\equiv [\beta'_c,\beta_c]\ni 0.5$, we have
$$g_c(I_c)=I_c;$$
both endpoints of $I_c$ map to $\beta_c$. The restriction $g_c|_{I_c}$ maps both halves $L_c=[\beta',0.5]$ and $R_c=[0.5,\beta]$  of the  interval $I_c$ onto the whole $I_c$ in a monotone fashion (that is, it folds $I_c$ over itself).

\begin{figure}[ht]
  \includegraphics[width=0.6\textwidth]{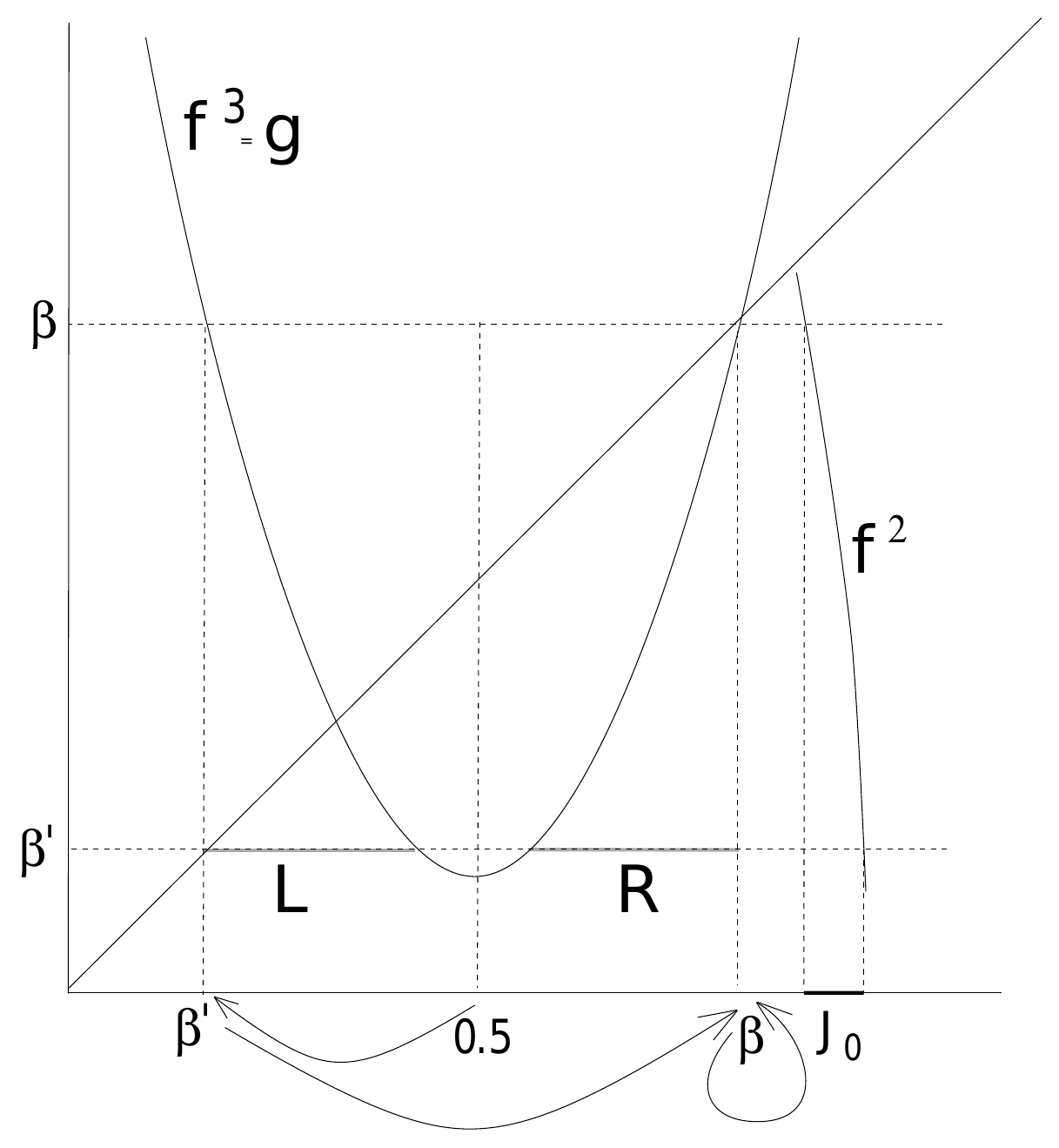}
\caption{\label{fig:iterates}Some iterates of $f\equiv f_a$ for $a\in(c,4]$ restricted to the appropriate intervals (we drop the subscript $a$ for simplicity in all notations in the figure).}
 \end{figure} 

For $a\in [c,4]$, there exists a continuous branch $\beta_a$ of the fixed point
$$g_a(\beta_a)=\beta_a,$$
and we again set $\beta'_a=1-\beta_a$  (so $g_a(\beta'_a)=\beta_a$), and $I_a\equiv[\beta'_a,\beta_a]$. Now, if $a\in(c,4]$, the image
  $$g_a(I_a)\supsetneq I_c,\text{ with }g_a(0,5)<\beta'_a.$$
  Thus, there is a pair of sub-intervals $L_a=[\beta'_a,l_a]$, $R_a=[r_a,\beta_a]$ inside $I_a$ which are mapped monotonely over $I_a$ by $g_a$ (the endpoints $l_a$, $r_a$ are both mapped to $\beta'_a$. See Figure~\ref{fig:iterates} for an illustration.

  Assigning values $0$ to $L_a$, and $1$ to $R_a$ we obtain symbolic dynamics on the set of points
  $$C_a\equiv \{x\in I_a\text{ such that }g_a^n(x)\in I_a\text{ for all }n\in\NN\}.$$
  If $a=c$, then, of course, $C_a=I_a$. For larger values of $a$,  we obtain a Cantor repeller (see e.g. \cite{dMvS}):
  \begin{prop}\label{prop-cantor}
If $a\in(c,4)$ then $C_a$ is a Cantor set, and the symbolic dynamics conjugates $g_a|_{C_a}$ to the full shift $\Omega=\{0,1\}^\N$.
    \end{prop}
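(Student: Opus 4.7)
The plan is to code each point of $C_a$ by the itinerary of its $g_a$-orbit with respect to the partition $\{L_a,R_a\}$ and to show that the resulting map is a homeomorphism onto the full shift $\Omega=\{0,1\}^{\NN}$ that conjugates $g_a|_{C_a}$ to $\sigma$.

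First I would observe that the orbit of any $x\in C_a$ avoids the open middle gap $(l_a,r_a)$ at every step: if $g_a^n(x)\in(l_a,r_a)$, then by the folding picture $g_a^{n+1}(x)<\beta'_a$ and hence lies outside $I_a$, contradicting $x\in C_a$. Consequently $C_a=\bigcap_{n\ge 0}g_a^{-n}(L_a\cup R_a)$, and the itinerary map $\phi:C_a\to\Omega$ defined by $\phi(x)_n=0$ iff $g_a^n(x)\in L_a$ is well-defined, continuous, and intertwines $g_a|_{C_a}$ with $\sigma$. Surjectivity of $\phi$ is standard: since each branch $g_a|_{L_a}\colon L_a\to I_a$ and $g_a|_{R_a}\colon R_a\to I_a$ is a monotone homeomorphism, for every finite word $w=s_0\ldots s_{n-1}$ the cylinder $[w]\cap C_a$ is the trace on $C_a$ of a nested closed subinterval $J_w\subset I_a$, and compactness gives at least one point with any prescribed itinerary.

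The crux is injectivity, equivalently the claim that $|J_w|\to 0$ as the word $w$ lengthens. Here I would invoke Mañé's hyperbolicity theorem for smooth one-dimensional maps (see de Melo--van Strien, Chapter~III): $g_a=f_a^3$ has negative Schwarzian derivative since $f_a$ does; the unique critical point $0.5$ of $g_a|_{I_a}$ lies in the omitted middle gap and is therefore not in $C_a$; and because $g_a(0.5)<\beta'_a$ exits $I_a$ on the very next iterate, the critical orbit is disjoint from $C_a$. By Singer's theorem, the immediate basin of any non-repelling periodic orbit of $g_a$ must contain the critical point, so no such periodic orbit can lie in $C_a$. Mañé's theorem then produces constants $K>0$ and $\lambda>1$ with $|(g_a^n)'(y)|\ge K\lambda^n$ for every $y\in C_a$ and $n\ge 1$, and the mean value theorem applied along $J_w$ forces $|J_w|\lesssim \lambda^{-|w|}$.

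With $|J_w|\to 0$, the map $\phi$ is a continuous bijection between compact Hausdorff spaces and hence a homeomorphism, realizing the desired conjugacy and showing that $C_a$ is homeomorphic to $\Omega$, i.e., a Cantor set. The main obstacle is the hyperbolicity step---specifically, verifying the absence of non-repelling periodic orbits in $C_a$ so that Mañé's theorem applies; beyond this point, the argument is standard symbolic-dynamics bookkeeping.
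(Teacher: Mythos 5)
The paper offers no proof of this proposition --- it is stated as a folklore fact with a pointer to de Melo--van Strien --- so you are reconstructing the standard argument rather than matching one, and your overall architecture (itinerary map, surjectivity from the two full monotone branches, injectivity from uniform expansion on $C_a$ via Ma\~n\'e's theorem together with negative Schwarzian derivative) is exactly the standard route the citation is gesturing at. Your preliminary observations are correct: orbits in $C_a$ never enter the gap $(l_a,r_a)$, so the itinerary map is well defined, and each cylinder is a nonempty closed interval mapped onto $I_a$ by the corresponding iterate.

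There is, however, a genuine gap at precisely the step you flag as the crux. The sentence ``by Singer's theorem, the immediate basin of any non-repelling periodic orbit of $g_a$ must contain the critical point, so no such periodic orbit can lie in $C_a$'' does not follow as stated: the immediate basin containing a critical point is, by itself, perfectly compatible with the cycle meeting $C_a$, so the word ``so'' is hiding the actual content. (Two further inaccuracies lurk here: $g_a=f_a^3$ has several critical points in $[0,1]$, not just $0.5$, and Singer applied to $g_a$ only delivers one of them in the basin; and your claim that the whole critical orbit is disjoint from $C_a$ because $g_a(0.5)<\beta_a'$ is unjustified --- the critical orbit may well re-enter $I_a$ later, and in the paper's main construction it does so forever. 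Only $0.5\notin C_a$ is needed for Ma\~n\'e, and that part is fine.) The missing argument is geometric: if a non-repelling cycle had a point $z\in C_a\subset L_a\cup R_a$, the immediate-basin component attached to $z$ would be an interval reaching from $z$ to some critical point of $g_a$, all of which lie either in the gap $(l_a,r_a)$ or outside $I_a$; such an interval must therefore contain one of the four points $l_a$, $r_a$, $\beta_a'$, $\beta_a$, each of which is eventually mapped onto the fixed point $\beta_a$. Since $\beta_a$ is repelling (its multiplier exceeds $1$ in modulus for all $a\in(c,4)$, which should be checked), none of these four points can lie in the basin of a different, non-repelling cycle --- and that is the contradiction. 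With this step supplied, Ma\~n\'e's theorem applies to the compact forward-invariant set $C_a$ and the rest of your symbolic-dynamics bookkeeping goes through.
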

  In particular, every periodic sequence of $0$'s and $1$'s corresponds to a unique periodic orbit in $C_a$ with this symbolic dynamics. These orbits clearly move continuously with $a$, and can be easily computed given $a$ and the symbolic sequence as the unique fixed points of the corresponding monotone branches of iterates of $g_a$. We enumerate all periodic sequences of $0$'s and $1$'s in $\Omega$ as follows. A sequence with a smaller period will precede a sequence with a larger period. Within the sequences of the same period, the ordering will be lexicographic, based on  
  the convention $1\prec 0$. 
We let $$\{p^1_{a,n},\ldots,p^{k_n}_{a,n}\}$$ be the periodic orbit of $g_a$ in $C_a$ which corresponds to the $n$-th symbolic sequence in this ordering (note that the first one is $\beta_a$). We denote
  $$\text{Per}_a(n)=\cup_{j=0}^2 f^j_a(\{p^1_{a,n},\ldots,p^{k_n}_{a,n}\}),$$
  which is, clearly, a periodic orbit of $f_a$.

\subsection{Constructing Non Computable Physical Measures}

Our arguments will be based on the construction of S.~Johnson \cite{Johnson}, which was further developed by  F.~Hofbauer and G.~Keller in \cite{HofKel}.

\subsubsection{Johnson's construction}
To give an idea of Johnson's construction in our setting, we start with the following  example:

    \medskip\noindent{\bf Example 1}: {\it The set $$\Omega_a(0.5)=\lambda_a(1)=\frac{1}{3}(\delta_{\beta_a}+\delta_{f_a(\beta_a)}+\delta_{f^2_a(\beta_a)})$$ and
      $\Omega(x)=\Omega(0.5)$ for almost every $x$ (compare with Theorem~1 of \cite{HofKel}).}

    \medskip\noindent
    Consider again Figure~\ref{fig:iterates} as an illustration. We note that there exists an interval $J_0$ to the right of the fixed point $\beta_a$ such that the following holds:
    \begin{itemize}
    \item $f^2_a(J_0)\Supset [\beta_a',\beta_a]$;
      \item Denote by $\psi_a$ the branch of $g_a^{-1}$ which fixes $\beta_a$. Then the interval $J_0$ is contained in the domain of definition of $\psi_a$. Thus, there is an orbit $$J_{-n}\equiv \psi_a^n(J_0) \to\beta_a \;(\text{here }f_a^{3n}(J_{-n})=J_0).$$
      \end{itemize}
    Moving the parameter $a\in(c,4]$, we can place the image $g_a^2(0.5)$ at any point of  $J_{-{n_1}}$, for an arbitrary $n_1$. If the value of $n_1$ is large, then the $g_a$-orbit of $0.5$ will spend a long time in a small neighborhood of  $\beta_a$, before hitting some $x_1\in J_0$.
  Adjusting the position of
      $a\in(c,4]$, we can ensure that $f_a^2(x_1)$ is inside $J_{-n_2}$ for an even larger $n_2$, so the orbit returns to an even smaller neighborhood of $\beta_a$ where it will spend an even longer time. Continuing increasing $n_k$'s as needed so the orbit of $0.5$ spends most of its time in ever smaller neighborhoods of $\beta_a$, we can ensure that the averages
    $\nu_a^n(0.5)=\frac{1}{n}\sum_{k=0}^{n-1}\delta_{f_a^k(0.5)} $ converge to the delta masses supported on the orbit of $\beta_a$.

   Proceeding in this way,
    for an arbitrarily large $l\in\NN$ and $x\in [\beta_a',\beta_a]$ we can find $a\in(c,4]$ and $m>2^{-l}$ such that:
    \begin{enumerate}
    \item the distance
      $$W_1(\nu_a^{m}(0.5)-\lambda_a(1))<2^{-l};$$
    \item the iterate $f_a^m(0.5)$ lies in $J_0$;
    \item the next iterate $f_a^{m+1}(0.5)=0.5$.  

    \end{enumerate}

    Property (3) ensures that the critical point $0.5$ is periodic with period $m+1$. Since $f_a'(0.5)=0$, we have
		$(f_a^{m+1})'(0.5)=0$, so this is a (super)attracting periodic point. Proposition~\ref{sink} implies that the physical measure $\mu_a$ for $f_a$ is supported on the orbit of $0.5$, and thus
    $$W_1(\mu_a-\lambda_a(1))<2\cdot 2^{-l}.$$

    In what follows, we will find it more convenient to replace property (3) with the property (3'):
    \begin{itemize}
    \item[(3')] the next iterate $f_a^{m+1}$ has a parabolic fixed point $p$ with $\frac{d}{dx}f_a^{m+1}(p)=1$ and $f_a^{(m+1)j}\underset{j\to\infty}{\longrightarrow}(0.5)$.  
    \end{itemize}
    This is easily achieved by replacing the center of a hyperbolic component obtained in (3) with a boundary point of the same hyperbolic component.
    
    Again, by Proposition~\ref{sink} and considerations of continuity, there exist $n>m$ and $\eps>0$ such that for any $a'$ with $|a'-a|<\eps$, we have
    $$W_1(\nu_{a'}^{n}(x)-\lambda_a(1))<4\cdot 2^{-l}$$
    for any $x$ in a set of length $1-2^{-l}$.

    Assuming $\eps$ is small enough, we again have $g_{a'}(0.5)$ slightly to the right of $\beta_{a'}$ and we can repeat the above steps inductively to complete the construction. 
    
        As a next step, we  construct an asymptotic  measure supported on two periodic orbits:

        \medskip
        \noindent
            {\bf Example 2:} {\it the set $\Omega(0.5)=a_1\lambda_1(1)+a_2\lambda_a(n)$ for $n>1$ and $a_1+a_2=1$.}

            \medskip\noindent
            Let $p\in \text{Per}_a(n)$ and, as before, denote by $3k_n$ its period. Letting $\phi$ denote the branch of $f_a^{-3k_n}$ fixing $p$,
            we again find a $\phi$-orbit
            $$J'_0\mapsto J'_{-1}\mapsto J'_{-2}\mapsto\cdots,\text{ with }J'_{-k}\to p,$$
            where
            $$f_a^s(J_0)\Supset [\beta'_a,\beta_a]$$
            for a univalent branch of the iterate $f_a^s$.

            Now we can play the same game as in Example 1, alternating between entering the orbit $J_{-k}$ close to the point $\beta_a$, and the orbit $J'_{-k}$ close to $p$. In this way, we can achieve the desired limiting asymptotics with any values $a_1, a_2$.

            Let us distill the inductive step in the above constructions as follows:
            \begin{definition}\label{j-parameter}
Let $a\in[c,4]$.
We will say that $a$ is an admissible parameter if the following properties hold:
\begin{enumerate}
\item $a$ is parabolic, that is, there exists $p\in[0,1]$ such that $f_a^j(p)=p$ and $Df^j_a(p)=1$;
  \item There exists an interval $J$ which is compactly contained in the immediate domain of repulsion of $p$ such that an iterate $f_a^i$ diffeomorphically maps $J$ onto $[\beta_a',\beta_a]$.

  \end{enumerate}

            \end{definition}

The following is a computable version of the construction from \cite{Johnson} (see also \cite{HofKel}):
\begin{lemma}\label{key_lemma} Let $a\in(c,4]$ be an admissible parameter, $\{l_n\}_n$ be a uniformly computable sequence of positive real numbers that add up to 1,  $\delta>0$ be a rational number, and $k>0$ be any positive integer. Then we can compute, uniformly in the above data, an admissible parameter $\bar{a}\in(a-\delta,a+\delta)$, a rational $\epsilon>0$, and an integer $m$ such that for all $n\in \NN$ and $r\in [\bar{a}-\epsilon, \bar{a}+\epsilon]$ we have:
  \begin{equation}\label{lem-1}
|\nu_r^m(x)(\textup{Per}_r(n)) - l_n| < 2^{-k} 
\end{equation}
for any $x$ in a set of measure at least $1-2^{-k}$. 
\end{lemma}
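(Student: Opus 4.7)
The plan is to effectivize and iterate the two-orbit construction of Example~2 across the periodic orbits $\text{Per}_a(n)$, terminating at a new admissible parameter $\bar a$. Admissibility of $a$ supplies a parabolic point $p$ whose immediate repulsion domain contains an interval $J$ mapping diffeomorphically by $f_a^i$ onto $[\beta'_a,\beta_a]$; iterating the parabolic branch of $f_a^j$ on $J$ produces a tower of preimages accumulating on $p$, which is the mechanism by which the critical orbit can be detained arbitrarily long near $p$ in finite time. By \propref{prop-cantor}, each $\text{Per}_a(n)\subset C_a$ comes with its own computable tower of preimages of $[\beta'_a,\beta_a]$ accumulating on the orbit, and both kinds of towers depend computably on $a$ via the implicit function theorem applied to the hyperbolic branches.

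Concretely, I would first use uniform computability of $(l_n)$ together with $\sum l_n = 1$ to produce a cutoff $N$ with $\sum_{n>N} l_n<2^{-k-3}$ and nonnegative rationals $\tilde l_n$ within $2^{-k-3}/N$ of $l_n$ for $n\le N$. Then inductively I would build admissible parameters $a=a_0,a_1,\dots,a_N=\bar a$ with $|a_{j+1}-a_j|<\delta\cdot 2^{-j-2}$, arranging that the critical orbit of $f_{a_{j+1}}$, extending the previous itinerary, enters a deep preimage interval of $\text{Per}_{a_{j+1}}(j+1)$, shadows that orbit for a time-fraction approximating $\tilde l_{j+1}$, and is then redirected back through $J$ into the parabolic repulsion domain of a new parabolic point $p_{j+1}$, reinstating admissibility for the next stage. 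Effectiveness of each $a_{j+1}$ follows from parallel bisection on (i) the parabolicity condition $Df_a^{\text{period}}(p)=1$ and (ii) the pre-periodicity condition placing $0.5$ into the prescribed preimage interval; both conditions are computable monotone functions of $a$ along the relevant hyperbolic branch near the boundary of its component.

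Setting $m$ equal to the total running time at the end of the $N$-th sojourn, $\nu_{\bar a}^m(0.5)$ charges a neighborhood of $\text{Per}_{\bar a}(n)$ by approximately $\tilde l_n$ for $n\le N$ and by $0$ otherwise, up to a concentration error $2^{-k-2}$. Joint continuity of the first $m$ iterates of $f_r$ in $(r,x)$, combined with \propref{sink} applied to the newly-created parabolic attractor of $\bar a$, then produces a rational $\epsilon>0$ such that the same estimates persist for all $r\in[\bar a-\epsilon,\bar a+\epsilon]$ on a set of Lebesgue measure $\ge 1-2^{-k}$; this yields \eref{lem-1} for $n\le N$ by combining the concentration error with $|\tilde l_n-l_n|$, and for $n>N$ automatically since $l_n<2^{-k-3}$ and the empirical mass on those orbits is correspondingly small.

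The main obstacle is the inductive step, in which at stage $j$ one must simultaneously solve, to effective accuracy, the combinatorial pre-periodicity condition placing the critical orbit in a deep preimage interval of $\text{Per}_a(j+1)$ \emph{and} the parabolicity condition at a new parabolic orbit, all within the geometric displacement budget $\delta\cdot 2^{-j-2}$. The delicate accounting is ensuring that the cumulative fraction errors and cumulative parameter displacements telescope and remain compatible with the final tolerance $2^{-k}$; this requires choosing the lengths of the individual sojourns to grow sufficiently fast that later stages do not undo the mass distribution established by earlier ones, and choosing the depth of each preimage interval so that the associated parabolicity bisection still terminates within the allocated sub-interval.
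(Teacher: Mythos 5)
Your strategy is a direct effectivization of the Johnson construction: you propose to build $\bar a$ by explicitly steering the critical orbit through successive sojourns near $\text{Per}(1),\dots,\text{Per}(N)$ with prescribed time-fractions, terminating at a new parabolic cycle. The paper takes a genuinely different and much shorter route. It fixes $N$ with $\sum_{n\le N} l_n > 1-2^{-(k+2)}$, quotes \cite{Johnson,HofKel} for the \emph{existence} of an admissible $\bar a\in(a-\delta,a+\delta)$ with $|\mu_{\bar a}(\text{Per}(n))-l_n|<2^{-(k+2)}$ for $n\le N$, and then upgrades existence to computation by exhaustive search: admissibility (Definition~\ref{j-parameter}) is cut out by algebraic conditions, so admissible parameters can be enumerated to any precision, and each strict inequality on $\mu_{\bar a}(\text{Per}(n))$ can eventually be certified by evaluating the left-hand side to sufficient precision. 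Since a witness exists, the dovetailed search halts; $m$ and $\epsilon$ are then extracted by a condition-number/continuity estimate. The point is that no step of the dynamical construction needs to be made effective --- only the \emph{verification} of the target properties does, and those are semi-decidable.

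Measured against this, your proposal has real gaps, and they are precisely the two items you yourself flag as ``the main obstacle'' and ``the delicate accounting'': (i) you never show that the itinerary conditions (placing the critical orbit in a prescribed deep preimage interval of $\text{Per}(j+1)$) and the terminal parabolicity condition can be solved \emph{simultaneously} within the displacement budget; this needs the full-sweep argument (as the parameter ranges over the window determined by the itinerary, the relevant iterate of $0.5$ covers all of $[\beta_a',\beta_a]$), which is the substance of \cite{Johnson} and is not supplied by ``parallel bisection'' on two separately monotone conditions; and (ii) for a single-pass, finite-time empirical measure the sojourn lengths must be allocated against a total time $m$ known only at the end, and your prescription that sojourn lengths ``grow sufficiently fast'' is the recipe for the infinite-time limit, not for hitting prescribed fractions at one fixed finite $m$. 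Moreover, to get \eref{lem-1} for a.e.\ $x$ rather than for $x=0.5$ you need the terminal non-repelling cycle itself to distribute its period according to the $l_n$, so that \propref{sink} transfers the estimate to a full-measure set; you gesture at this but do not verify it. None of these is unfixable --- they are the content of the Hofbauer--Keller analysis --- but as written the hardest steps are named rather than carried out, whereas the paper's search-and-verify argument avoids them entirely.
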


\begin{proof}
  Let $N\in\NN$ have the property
  $$\underset{1\leq n\leq N}{\sum}l_n>1-2^{-(k+2)}.$$
  From \cite{Johnson,HofKel}, there exists an admissible parameter $\bar{a}\in(a-\delta,a+\delta)$ such that
  \begin{equation}\label{lem-2}
|\mu_{\bar a}(\textup{Per}_r(n)) - l_n| < 2^{-(k+2)} \quad \text{ for } 1\leq n\leq N. 
  \end{equation}

  Note that such a parameter can be found by a greedy search. Indeed, parabolic parameters are specified by a set of algebraic conditions, and so are the endpoints of an interval $J$ in part (2) of Definition~\ref{j-parameter}. Thus a countable search eventually finds each admissible parameter with any desired precision.
  Similarly, if a parameter satisfies the inequality (\ref{lem-2}), then this can be eventually verified by performing the calculation of the left hand side with precision $2^{-j}$ for some sufficiently large $j$. The countable search which at step $j$ attempts to verify (\ref{lem-2}) with precision $2^{-j}<2^{-(k+2)}$ for admissible parabolic parameters with periods not greater than $j$ will eventually succeed in finding such $\bar a$.

  Once $\bar a$ is found, it is a standard exercise on estimating the condition number (see, for example, \cite{BSS-cond})
   to find $m$ and $\epsilon$.
  \end{proof}

\subsubsection{Proof of the main result}

\begin{theorem}There exists computable parameters $a\in(2,4)$ for which the map $f_a$ has a unique physical measure $\mu_a$ whose computability is equivalent to solving the Halting Problem. 
\end{theorem}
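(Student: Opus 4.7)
The plan is to apply \lemref{key_lemma} iteratively, producing $a$ as the limit of a computable Cauchy sequence of admissible parameters $(a_k)$, where the target weights at each step are designed so that the Halting Problem is encoded in the masses that $\mu_a$ places on the periodic orbits $\textup{Per}_a(n)$.

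Fix an effective enumeration $(M_n)_{n\geq 1}$ of Turing machines and let $h(n)\in\{0,1\}$ be the halting function, with computable non-decreasing rational approximations $h^{(t)}(n)=1$ iff $M_n$ halts in at most $t$ steps. Set $q_n=2^{-n-1}$ for $n\geq 2$ and define the uniformly computable rational family
$$
l_n^{(k)}=q_n\bigl(h^{(k)}(n)+2^{-k}\bigr)\text{ for }2\leq n\leq k,\quad l_n^{(k)}=q_n\cdot 2^{-k}\text{ for }n>k,\quad l_1^{(k)}=1-\sum_{n\geq 2}l_n^{(k)}.
$$
Each $l_n^{(k)}$ is strictly positive, $\sum_n l_n^{(k)}=1$, and the pointwise limit $\ell_n:=\lim_k l_n^{(k)}$ satisfies $\ell_n=q_n\,h(n)$ for $n\geq 2$. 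In particular $\ell_n\geq q_n>0$ when $M_n$ halts, with a computable positive lower bound, while $\ell_n=0$ otherwise.

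Starting from an arbitrary admissible $a_0\in(c,4)$, I would define $a_k$ inductively by applying \lemref{key_lemma} to the data $(a_{k-1},(l_n^{(k)})_n,\delta_k,k)$, choosing rational $\delta_k>0$ small enough that $a_k\in(a_{k-1}-\delta_k,a_{k-1}+\delta_k)\subset[a_{k-1}-\eps_{k-1},a_{k-1}+\eps_{k-1}]$ and $\sum_{j>k}\delta_j<\eps_k$. The sequence $(a_k)$ is Cauchy with computable modulus and converges to a computable $a\in(2,4)$; by construction $a\in[a_k-\eps_k,a_k+\eps_k]$ for every $k$, so the empirical estimate $|\nu_a^{m_k}(x)(\textup{Per}_a(n))-l_n^{(k)}|<2^{-k}$ holds (uniformly in $n$) on a set of $x$ of measure at least $1-2^{-k}$. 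A Borel--Cantelli argument, combined with $l_n^{(k)}\to\ell_n$ and the normalization $\sum_n l_n^{(k)}=1$ (which forces the residual mass outside small neighborhoods of the relevant orbits to vanish), then shows that for Lebesgue-a.e. $x$ the empirical measures $\nu_a^{N}(x)$ converge weakly to $\mu_a:=\sum_{n\geq 1}\ell_n\lambda_a(n)$, which is by definition the unique physical measure of $f_a$.

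With $\mu_a$ so identified, both directions of the stated equivalence follow. If $\mu_a$ is computable, I would choose for each $n\geq 2$ a computable Lipschitz bump $\phi_n$ supported in a small neighborhood of $\textup{Per}_a(n)$ and vanishing on all other orbits $\textup{Per}_a(m)$; this is possible because the orbits are pairwise disjoint and their locations are computable from $a$ and the symbolic data via \propref{prop-cantor}. Then $\int\phi_n\,d\mu_a=\ell_n$, and comparing the computed value to the threshold $q_n/2$ decides $h(n)$. Conversely, given a halting oracle the sequence $(\ell_n)$ becomes uniformly computable, as do the atomic measures $\lambda_a(n)$, so $\mu_a$ is computable. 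The main obstacle lies in the middle step: \lemref{key_lemma} controls empirical averages only at the specific times $m_k$, whereas the definition of a physical measure demands convergence of $\nu_a^N(x)$ along all $N\to\infty$. Overcoming this requires $m_{k+1}$ to dominate all intrinsic time scales at $a_k$ (parabolic return times near the chosen parabolic orbit, and the settlement time of almost all of $[0,1]$ into the attracting regime), so that empirical measures at any intermediate time $N\gg m_k$ differ from $\nu_a^{m_k}(x)$ by a controlled error, allowing the full weak limit to be taken; a secondary but manageable point is arranging the $\delta_k$'s to preserve an effective separation of the neighborhoods used to build the bumps $\phi_n$.
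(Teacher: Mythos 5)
Your proposal follows essentially the same route as the paper: iterate \lemref{key_lemma} along a computably shrinking nested sequence of parameter intervals so that the limit $a$ is computable, with the target weights encoding the halting set in the masses $\mu_a(\text{Per}_a(n))$. The only structural differences are bookkeeping choices — you encode $h(n)$ as presence/absence of mass $q_n$ at $\text{Per}(n)$ and update every weight at every stage via $h^{(k)}$, whereas the paper keeps the pair-mass $\mu(\text{Per}(2n-1))+\mu(\text{Per}(2n))=2^{-n}$ fixed and ``switches'' mass from the odd to the even orbit only when $M_n$ is observed to halt — and both variants work, since in either case $l_n^{(k)}\to\ell_n$ and the limit measure is non-computable by the same upper-semi-computation argument. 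The gap you flag about intermediate times $N$ between $m_k$ and $m_{k+1}$ is real but is equally present in the paper's own write-up, which implicitly defers to the Johnson/Hofbauer--Keller machinery for it.

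One local step, as written, would fail: you cannot choose a continuous bump $\phi_n$ that equals $1$ on $\text{Per}_a(n)$, is supported in a small neighborhood of it, and \emph{vanishes on all other orbits} $\text{Per}_a(m)$. By \propref{prop-cantor} the periodic points of $g_a$ are dense in the Cantor set $C_a$, so every neighborhood of a point of $\text{Per}_a(n)$ contains points of infinitely many other periodic orbits, and continuity forces $\phi_n$ to be positive at some of them; hence $\int\phi_n\,d\mu_a=\ell_n$ is not an identity. The repair is exactly the paper's argument: for a computable measure the mass of a \emph{finite} set of computable points is computable from above, so choose the support of $\phi_n$ to avoid the finitely many orbits $\text{Per}_a(m)$ with $m\le M$, $m\ne n$, which gives $\ell_n\le\int\phi_n\,d\mu_a\le\ell_n+2^{-M-1}$; taking $2^{-M-1}<q_n/4$ the threshold test against $q_n/2$ still distinguishes $\ell_n=0$ from $\ell_n=q_n$, and dovetailing this upper semi-computation with a simulation of $M_n$ decides halting. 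With that correction your argument is sound and matches the paper's in substance.
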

\begin{proof}

Let $M_n$ be an effective enumeration of all Turing Machines and let $\mu$ be the probability measure defined by:  

\[
\mu(\text{Per}(m))=\begin{cases}  2^{-n} &  \text{ if } m = 2n \text{ and } M_n \text{ halts};  \\ 2^{-n} & \text{ if} \quad m = 2n-1 \text{ and } M_n \text{ doesn't halt }\\
0 & \text{ otherwise. } \end{cases}
\]
We note that for all $n \in \N$ it holds \[\mu\big(\text{Per}(2n) \cup \text{Per}(2n-1)\big) = 2^{-n} \] so that $\mu$ is a probability measure supported on the collection $\{\text{Per}(n)\}_{n\in\N}$. Moreover, the measure $\mu$ cannot be computable. This follows from the fact that for any computable probability measure, the probability of a finite collection of computable points can be uniformly computed from above. Indeed, if $C=\{x_i\}_{i=1}^s$ is such a collection, then one can consider the collection of computable functions $\varphi_l(x)$ which equal 1 at every point  of $C$, 0 outside a $2^{-l}$-neighbourhood of $C$, and that are linear in between. For a computable probability measure $\nu$, by computing the values $\int \varphi_l(x)\, d\nu = q(l)$, we obtain a computable sequence $q(l)$ which decreases to $\nu(C)$ as $l\to\infty$. 
Suppose now that $\mu$ is a computable measure. Then, for each machine $M_n$, we can uniformly compute a sequence $q_1(l)$ which decreases to $\mu(\text{Per}(2n))$, and  a sequence $q_2(l)$ which decreases to $\mu(\text{Per}(2n-1))$. Therefore, since one of these sequences decreases to 0, by comparing them against $2^{-n}$, we can decide whether $\mu(\text{Per}(2n))<2^{-n}$ which contradicts the undecidability of the Halting Problem.  

We now proceed with the construction and show how to compute a parameter $a$ such that $\mu_a=\mu$.  The parameter $a$ will be the intersection of a nested sequence of closed intervals inductively constructed. To start, we choose any computable parabolic parameter $c'>c$ and consider the sequence of weights $\{l^0_n\}_n$ given by:
\[
l^0_{m}=\begin{cases}2^{-n} &\text{ if } m=2n-1 \\ 0 &\text{ otherwise. }\end{cases}
\] 

Note that this sequence of weights corresponds to the measure $\mu$ in the case where none of the machines ever halt.  By Lemma \ref{key_lemma} we can compute a parabolic parameter $a_0\in (c,c')$, a positive rational $\epsilon_0<a_0-c$ and an integer $m_0$ such that for all $n\in \N$ and all $r\in I_0=[a_0-\epsilon_0,a_0]$ it holds
\[
|\nu_r^{m_0}(x)(\text{Per}_r(n)) - l^0_n|<2^{-1}
\]
for any $x$ in a set of measure at least $1-2^{-1}$. Let $M_{n_1}$ be the first machine that halts. By Lemma  \ref{key_lemma} again, we can compute a parabolic point $a_1\in I_0$, a rational $\epsilon_1<a_1-(a_0-\epsilon_0)$ and an integer $m_1$ such that for all $r\in I_1=[a_1-\epsilon_1]\subset I_0$ it holds:
\[
|\nu_r^{m_1}(x)(\text{Per}_r(n)) - l^1_n|<2^{-2}
\]
for any $x$ in a set of measure at least $1-2^{-2}$, where $\{l^1_n\}_{n\in \N}$ is defined by
\[
l^1_n=\begin{cases}  l^0_n &   \text{ if } n \notin \{2n_{1}, 2n_{1}-1\}  \\ 2^{-n_1}  & \text{ if }  n = 2n_1\\ 0& \text{ if } n=2n_1-1 \end{cases}
\]
and corresponds to ``switching" the weight $2^{-n_1}$ from Per$(2n_1-1)$ to Per$(2n_1)$. We proceed inductively, switching weights each time a new machine halts. More precisely, assuming the interval  $I_k$ from stage $k$ has already been constructed, and denoting by $M_{n_{k+1}}$ the $(k+1)-$th machine that halts, we can use Lemma \ref{key_lemma} to compute a new interval $I_{k+1}$ and an integer $m_{k+1}$ satisfying the following properties:
\begin{itemize}
\item[i)]$I_{k+1}$ is strictly contained in $I_k$ and of at most half the size;   
\item[ii)] the right end of $I_{k+1}$ is a parabolic point $a_{k+1}$ and
\item[iii)] for all $n\in\N$ and $r\in I_{k+1}$ it holds
\[
|\nu_r^{m_{k+1}}(x)(\text{Per}_r(n)) - l^{k+1}_n|<2^{-(k+2)}
\]
for any $x$ in a set of measure at least $1-2^{-(k+2)}$, where $\{l^{k+1}_n\}_{n\in \N}$ is defined by
\[
l^{k+1}_n=\begin{cases}  l^k_n &  \text{ if } n \notin \{2n_{k+1}, 2n_{k+1}-1\}  \\ 2^{-n_{k+1}} & \text{ if } n = 2n_{k+1} \\ 0 & \text{ if } n=2n_{k+1}-1. \end{cases}
\]
\end{itemize}
We now let 
\[
a=a_\infty = \bigcap_k I_k.
\]

Since $I_k$ shrinks exponentially, we have that $a$ is a computable parameter. We now show that $\mu_{a}=\mu$ as announced. For any $n\in\N$, let $M_{n_l}$ be the last machine that halts among the first $n$ machines. That is, a machine among the first $n$ that halts, must halt before $M_{n_l}$. Note that in this case, for every $i\leq n$ we have
\[l^k_i = \mu\big(\text{Per}(i)\big)\]
for all $k>l$. Since $a$ belongs to all the $I_k$, we have that for every $\epsilon>0$ \[|\nu_a^{m}(x)(\text{Per}_a(i)) - \mu\big(\text{Per}_a(i)\big)|<\epsilon\] holds for all $m>m_l$, all $i\leq n$ and any $x$ in a set of measure at least $1-\epsilon$. It follows that for all $i\leq n$
\[
\nu_a^\infty(x)(\text{Per}_a(i))=\mu\big(\text{Per}_a(i)\big) \quad \text{ for almost every }x. 
\]
Since this holds for all $n$, we obtain that $\nu_a^{\infty}(x)=\mu$ for almost every $x$, and thus $\mu$ is the unique physical measure of $f_a$ as it was to be shown. 

\end{proof}

\voided{
\subsubsection{The relativized case}

For positive $p,q,\delta$ in $\Q$, let $h_{p,q,\delta}$ denote the \emph{hat} continuous function whose value equals  1 over $[p,q]$, 0 outside $[p-\delta,q+\delta]$ and that is linear in between.  Let $(\tau_{i})_{i\in\NN}$ be the collection of functions which are the maximum of finitely many hat continuous functions. Note that this is a countable collection of functions that can be enumerated in a uniformly computable way.  

We construct a parameter $a$ for which the map $f_a=ax(1-x)$ has a unique physical measure $\mu_a$ such that for any Turing Machine $M^{\phi}$ with an oracle $\phi$ for $a$, that computes a probability measure, there exists $i$ and $\epsilon>0$ such that 

$$
|M^{\phi}(i,\epsilon) - \int \tau_{i}\, d\mu|  > \epsilon.
$$

Our construction can be thought of as a game between a \emph{Player} and infinitely many \emph{opponents}, which will correspond to the sequence consisting of
machines $M^{\phi}_n$ that compute some probability measure. The opponents try to compute $\mu_a$ by asking the Player to provide an oracle $\phi$ for $a$,
while the Player tries to chose the bits of $a$ in such a way that none of the opponents correctly computes $\mu_a$.

We show that the Player always has a winning strategy: it plays against each machine, one by one, asking the machine to compute the expected value of a particular function $\tau_{i}$ to a certain degree of accuracy. The machine then runs for a while, asking the Player to provide more and more bits of $a$, until it eventually halts and outputs a rational number. Then the Player reveals the next bit of $a$ and shows that the machine's answer is incompatible with $\mu_{a}$. The details are as follows.


We will proceed inductively.  Let $M^{\phi}_{1}, M^{\phi}_{2},\dots$ be some enumeration of all the machines with an oracle for $a$ that compute some probability measure. 
 At step $n$ of the induction, we will have a parameter $a_n\in(c,4)$ and a natural number $l_{n}$ such that:
\begin{enumerate}
\item $a_n\in\tl\cP$;
\item\label{fooling} there exists $i=i(n)\in \N$ such that either
\begin{itemize}
\item $M_{n}^{\phi}(\tau_{i},2^{-{n}}/100) \leq 2^{-{n}}/2$  whereas $\mu_{a_{n}}(\tau_{i})\sim2^{-{n}}$; or
\item $M_{n}^{\phi}(\tau_{i},2^{-{n}}/100) > 2^{-{n}}/2$  whereas $\mu_{a_{n}}(\tau_{i})\sim0$
\end{itemize}
In other words, given an oracle for $a_n$, the machine $M_n^\phi$ cannot correctly approximate the value of $\mu_{a_{n}}$ at $\tau_{i}$;
\item\label{measures}  $|\mu_{a_{n-1}}(\tau_{i(k)}) - \mu_{a_{n}}(\tau_{i(k)})| < 2^{-3n}$ for all $k<n$; 
\item\label{parameters} $|a_n-a_{n-1}|<2^{-3l_n}$.
\end{enumerate}
\bigskip

\noindent\textbf{Base of the Induction.}  We start by letting $a$ be any of the parameters in $\tl \cP$. We note that 
$
m_a(1)+m_a(2)=2^{-1}.
$
 We now choose $\tau=\tau_{i(1)}$ such that $\tau(x)=1$ for all $x\in \text{Per}_a(1)$ and 
 \begin{equation}
\label{tau} \supp \tau \cap \text{Per}_a(j) = \emptyset \,\, \text{ for all } \,\, 1<j\leq 10.  
\end{equation}
Since the total weight $\mu_a$ assigns to the collection $\bigcup_{j>10}\text{Per}_a(j)$ is less $2^{-10}$, it follows that $|\mu_{a}(\tau) - m_a(1)|<2^{-10}$.  

We now let the machine $M^{\phi}_{1}$ compute the expected value of $\tau$ with precision $2^{-1}/100$, giving it $a$ as the parameter. Let $l_1$ be the last time a bit of $a$ is queried by $M^{\phi}_{1}$ during the computation. By Proposition \ref{induction.step}, for any $s\in \{0,1\}$ there exists $a'$ such that 
\begin{itemize}
\item $|a-a'|<2^{-3l_1}$;
\item $a'\in\tl\cP$;
\item $m_{a'}(2-s)=2^{-1}$. 
\end{itemize}

Let $q=M^{\phi}_{1}(\tau,2^{-1}/100)$.  There are two possibilities:

\begin{enumerate}
\item[Case 1. ]\label{case1} If $q\leq 2^{-1}/2$, we chose $a'$ above so as to have $m_{a'}(1)=2^{-1}$;
\item[Case 2. ]\label{case2} If $q>2^{-1}/2$, we chose $a'$ above so as to have $m_{a'}(2)=2^{-1}$ (and therefore $m_{a'}(1)=0$);
\end{enumerate}
We then let $a_{1}\equiv a'$. By continuity, we can choose $a_1$ sufficiently close to $a$ so as to ensure property \ref{tau} for $a_1$ as well. It follows that $|\mu_{a_1}(\tau)-m_{a_1}(1)|<2^{-10}$. Note that up to the first $l_1$ bits, $a$ and $a_{1}$ are indistinguishable and therefore the machine $M^{\phi}_{1}$ will return the same answer for both parameters. It follows that the machine $M^{\phi}_{1}$ cannot correctly approximate $\mu_{a_1}$ at $\tau$. 

\bigskip

\noindent\textbf{Step of the Induction.} Assume $a_{n-1}\in \tl \cP$ has been defined. Then it holds 
$$
m_{a_{n-1}}(2n-1)+m_{a_{n-1}}(2n) = 2^{-n},
$$
and there exists $\tau=\tau_{i(n)}$ such that $\tau(x)=1$ for all $x\in \text{Per}_{a_{n-1}}(2n-1)$ and 
\begin{equation}
\label{taun} \supp \tau \cap \text{Per}_{a_{n-1}}(j) = \emptyset \,\, \text{  for all }\,\, 2n-1 \neq j \leq 10n.
\end{equation}
It follows that $|\mu_{a_{n-1}}(\tau) - m_{a_{n-1}}(2n-1)|<2^{-10n}$. 

Once again, we let the machine $M^{\phi}_{n}$ compute the expected value of $\tau$ with precision $2^{-n}/100$, giving it $a_{n-1}$ as the parameter. Let $l_n$ be the last time a bit of $a_{n-1}$ is queried by $M^{\phi}_{1}$ during the computation. By Proposition \ref{induction.step} again, for any $s\in \{0,1\}$ there exists $a'$ such that
\begin{itemize}
\item $|a_{n-1}-a'|<2^{-3l_n}$;
\item $a'\in\tl\cP$;
\item $m_{a_{n-1}}(t)=m_{a'}(t)$ for all $t\notin \{2n-1,2n\}$ and
\item $m_{a'}(2n-s)=2^{-n}$. 
\end{itemize}

Let $q=M^{\phi}_{n}(\tau,2^{-n}/100)$.  There are two possibilities:

\begin{enumerate}
\item[Case 1. ]\label{case1} If $q\leq 2^{-n}/2$, we chose $a'$ above so as to have $m_{a'}(2n-1)=2^{-n}$;
\item[Case 2. ]\label{case2} If $q>2^{-n}/2$, we chose $a'$ above so as to have $m_{a'}(2n)=2^{-n}$ (and therefore $m_{a'}(2n-1)=0$);
\end{enumerate}
We then let $a_{n}\equiv a'$. We again choose $a_n$ close enough to $a_{n-1}$ so as to have property \ref{taun} for $a_n$ as well. It follows that $|\mu_{a_n}(\tau)-m_{a_n}(2n-1)|<2^{-10n}$.  Note that up to the first $l_n$ bits, $a_{n-1}$ and $a_{n}$ are indistinguishable, and thus the machine $M^{\phi}_{n}$ will return the same answer for both parameters. Thus,  the machine $M^{\phi}_{n}$ cannot correctly approximate $\mu_{a_n}$ at $\tau$.  Moreover, property \ref{taun} again and the fact that (by construction) $a_n$ satisfies   
$$
m_{a_{n-1}}(t)=m_{a_n}(t) \,\,\text{ for all } \,\, t\notin \{2n-1,2n\},
$$
guarantee that Condition (\ref{measures}) is satisfied as well. We now let $a_{\infty}=\lim_na_n$ and claim that $\mu_{a_{\infty}}$ has the required properties. Indeed, Condition (\ref{fooling}) ensures that for every $n$ there is a  function $\tau_{i(n)}$ at which machine $M^{\phi}_n$ fails to compute correctly the expected value for $\mu_{a_n}$, and Condition (\ref{measures}) guarantees that the same holds for $\mu_{a_{\infty}}$. Finally, note that by Proposition \ref{induction.step} we have uncountably many choices at each step of the inductive process, and therefore we obtain uncountably many examples.

\section{Proof of Theorem \ref{th:hk}}\label{appendix}

Below, we  prove Theorem \ref{th:hk} using the results of \cite{HofKel}. \\

Let $\Omega=\{0,1\}^\N$ be the full shift and define $\Omega_N\subset\Omega$ by:
$$
\Omega_N=\{\omega \in \Omega: 0^N \text{ and  } 01^{2i+1}0\, (i\geq 0) \text{ do not occur as subwords of } \omega \}. 
$$
Let $\text{Per}(n)$ be the $n$-th periodic sequence in $\Omega_N$ in the lexicographic order and let $\delta(n)$ be the shift invariant ergodic measure supported on $\text{Per}(n)$.  By Theorem 5 from \cite{HofKel} applied to $C=\{\delta(n)\}$, there exists uncountably many parameters $a$ for which the following holds:

\begin{itemize}
\item[i)]  $f_a$ has a unique physical measure $\nu_a(n)=\Omega_a(0.5)$, 
\item[ii)] $\nu_a(n)=(\varphi^*_a)^{-1}(\{\delta(n)\})$, where $\varphi^*_a$ is defined in \cite{HofKel} as the correspondence between the measures supported on symbolic sequences with those supported in the dynamical interval.
\end{itemize}
In particular, $\nu_a(n)$ is supported on a periodic orbit of $f_a$ that we will denote by $\text{Per}_a(n)$, and which must be repelling since otherwise the kneading sequence of $f_a$ would be eventually periodic. Hence, it depends continuously on the parameter. Let $\{l_n\}_{n\in\N}$ be a non-negative sequence of reals with $\sum l_n=1$ and let 
$$
\mu=\sum l_n \delta(n).
$$
Since $\mu$ is a convex combination of shift invariant measures, it is itself invariant for the shift on $\Omega_N$. By Theorem 5 in \cite{HofKel} again, this time applied to $C=\{\mu\}$, there are uncountably many parameters $a$ for which the following holds:
\begin{itemize}
\item[i)]  $f_a$ has a unique physical measure $\mu_a=\Omega_a(0.5)$, 
\item[ii)] $\mu_a({\rm Per}_a(n))=\varphi_a^{-1}(\{\mu({\rm Per}(n))\})=l_n$,  
\end{itemize}
as is was to be shown. The density statement follows from Theorem 4 of \cite{HofKel} and continuity of kneading sequences.

}

\section{Conclusion}
\label{sec:conclusion}

Ever since the first numerical studies of chaotic dynamics appeared in the early 1960's (such as the work of Lorenz \cite{Lorenz}), it has become commonly accepted among practitioners that computers cannot, in general, be used to make deterministic predictions about future behavior of nonlinear dynamical systems. Instead, the standard practice now is to make statistical predictions. This approach is based on the Monte Carlo method, pioneered by Ulam and von Neumann at the dawn of the computing age. It is universal and powerful -- and only requires access to the dynamical system as a black box, which is then subjected to a number of statistical trials.
Applications of the Monte Carlo technique are ubiquitous, ranging from weather forecasts to simulating nuclear weapons tests (nuclear weapons design was, of course, the original motivation of its inventors).

Our result raises a disturbing possibility that even for the most simple family of examples of non-linear dynamical systems the Monte Carlo method can fail. Given one of our examples as a black box, no algorithm can find its limiting statistics. How common such examples are in higher-dimensional families of dynamical systems, and whether one is likely to encounter one {\it in practice} remain exciting open questions.

\bibliographystyle{alpha}
\bibliography{bib_RY}

\end{document}